\theoremstyle{plain}
\newtheorem{theorem}{Theorem}[section]
\newtheorem{lemma}[theorem]{Lemma}
\newtheorem{proposition}[theorem]{Proposition}
\newtheorem{corollary}[theorem]{Corollary}
\theoremstyle{definition}
\newtheorem{remark}[theorem]{Remark}
\newcommand{\MM}{\mathcal M}
\newcommand{\BM}{\overline{\mathcal M}}
\newcommand{\PP}{\mathcal P}
\newcommand{\calX}{\mathcal X}
\newcommand{\RR}{\mathbb R}
\newcommand{\HH}{\mathcal H}
\newcommand{\calL}{\mathcal L}
\newcommand{\hyp}{\operatorname{hyp}}
\newcommand{\GL}{\operatorname{GL}}
\newcommand{\even}{\operatorname{even}}
\newcommand{\odd}{\operatorname{odd}}
\newcommand{\wt}{\widetilde}
\newcommand{\bbC}{\mathbb C}
\newcommand{\bbP}{\mathbb P}
\newcommand{\bbR}{\mathbb R}
\newcommand{\bbZ}{\mathbb Z}
\newcommand{\SL}{\operatorname{SL}}
\newcommand{\diag}{\operatorname{diag}}
\begin{document}

\makeatletter
	\@namedef{subjclassname@2010}{%
	\textup{2010} Mathematics Subject Classification}
	\makeatother

\title{Affine geometry of strata of differentials}

\author{Dawei Chen}
\address{Department of Mathematics, Boston College, Chestnut Hill, MA 02467}
\email{dawei.chen@bc.edu}

\subjclass[2010]{14H10, 14H15, 14E99}
\keywords{Strata of differentials; moduli space of curves; complete curves; affine varieties; affine invariant manifolds}

\date{}

\thanks{The author is partially supported by NSF CAREER Award DMS-1350396.}

\begin{abstract}
Affine varieties among all algebraic varieties have simple structures. For example, an affine variety does not contain any complete algebraic curve. In this paper we study affine related properties of strata of $k$-differentials on smooth curves which parameterize sections of the $k$-th power of the canonical line bundle with prescribed orders of zeros and poles. We show that if there is a prescribed pole of order at least $k$, then the corresponding stratum does not contain any complete curve. Moreover, we explore the amusing question whether affine invariant manifolds arising from Teichm\"uller dynamics are affine varieties, and confirm the answer for Teichm\"uller curves, Hurwitz spaces of torus coverings, hyperelliptic strata as well as some low genus strata. 
\end{abstract}

\maketitle

%\setcounter{tocdepth}{1}
%\tableofcontents

%%%%%%%%%%%%%%%%%
\section{introduction}
\label{sec:intro}
%%%%%%%%%%%%%%%%%

For a positive integer $k$, let $\mu = (m_1, \ldots, m_n)$ be an integral partition of $k(2g-2)$, i.e. $m_i \in \bbZ$ for all $i$ and $\sum_{i=1}^n m_i = k(2g-2)$. The stratum of $k$-differentials $\HH^k(\mu)$ parameterizes pairs $(C, \xi)$ where $C$ is a smooth complex curve of genus $g$ and $\xi$ is a (possibly meromorphic) section of $K_C^{\otimes k}$ such that $(\xi)_0 - (\xi)_\infty = \sum_{i=1}^n m_i p_i$ for distinct points $p_1, \ldots, p_n \in C$. If we consider differentials up to scale, then the corresponding stratum of $k$-canonical divisors $\PP^k(\mu)$ parameterizes the underlying divisors $ \sum_{i=1}^n m_i p_i$, and $\HH^k(\mu)$ is a $\bbC^*$-bundle over $\PP^k(\mu)$. We also write $\HH(\mu)$ and $\PP(\mu)$ for the case $k=1$. 

Abelian and quadratic differentials, i.e. the cases $k=1$ and $k=2$ respectively, have broad connections to flat geometry, billiard dynamics, and Teichm\"uller theory. Recently their algebraic properties (also for general $k$-differentials) have been investigated intensively, which has produced fascinating results in the study of both Teichm\"uller dynamics and moduli of curves. We refer to \cite{ZorichFlat, WrightLectures, Bootcamp, EskinMirzakhani, EskinMirzakhaniMohammadi, Filip, FarkasPandharipande, BCGGM1, BCGGM2} for related topics as well as some recent advances. 

Let $\MM_g$ be the moduli space of smooth genus $g$ curves (and $\MM_{g,n}$ the moduli space with $n$ ordered marked points). One of the central questions in the study of moduli of curves is to determine complete subvarieties in $\MM_g$. Diaz \cite{Diaz} first proved that the dimension of any complete subvariety in $\MM_g$ is at most $g-2$ (see \cite{Looijenga, GrushevskyKrichever} for alternate proofs). Nevertheless, it is unknown whether this bound is sharp. In particular, we do not know whether $\MM_4$ contains a complete algebraic surface. 

Despite that little is known about the global geometry of the strata of differentials (see \cite{LooijengaMondello, Gendron, Barros} for some special cases), one can similarly ask the question about complete subvarieties in the strata. Our first result is as follows. 

\begin{theorem}
\label{thm:complete}
If $\mu$ has an entry $m_i \leq -k$, then there is no complete curve in $\HH^k(\mu)$ and $\PP^k(\mu)$.  
\end{theorem}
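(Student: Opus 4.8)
The plan is to reduce the statement to the projectivized stratum and then play the positivity of relative canonical classes on a fibered surface against the negative entry $m_i$. Since $\HH^k(\mu)$ is a $\bbC^*$-bundle over $\PP^k(\mu)$ and $\bbC^*$ is affine, a complete curve in $\HH^k(\mu)$ maps to a complete curve in $\PP^k(\mu)$ (or to a point, forcing it into an affine fiber), so it suffices to rule out complete curves in $\PP^k(\mu)$. As a $k$-differential with divisor $\sum_j m_jp_j$ is unique up to scale, $\PP^k(\mu)$ embeds into $\MM_{g,n}$ by recording the marked points. When $g=0$ this already finishes the job, since $\MM_{0,n}$ is a hyperplane-arrangement complement, hence affine, so $\PP^k(\mu)$ is quasi-affine and contains no complete curve. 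For $g\ge 1$, suppose $B\subset\PP^k(\mu)$ is a complete curve; after normalizing and pulling back the universal family I obtain a family $f\colon\calX\to B$ of smooth $n$-pointed genus-$g$ curves over a smooth complete curve with nonconstant moduli map, disjoint sections $\sigma_1,\dots,\sigma_n$, and a relation $\omega_{\calX/B}^{\otimes k}\cong\OO_{\calX}\big(\sum_jm_j\sigma_j\big)\otimes f^*L$ for a line bundle $L$ on $B$ (the $k$-canonical relation holds fiberwise, hence up to a pullback twist).

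Next I would intersect this relation with curves on the surface $\calX$. Writing $\psi_j=\deg_B\sigma_j^*\omega_{\calX/B}$ and $\lambda=\deg_Bf_*\omega_{\calX/B}$, and using $\sigma_j\cdot\sigma_\ell=0$ for $j\ne\ell$, $\sigma_j^2=-\psi_j$, and $\omega_{\calX/B}^2=12\lambda$, pairing with $\sigma_j$ gives $\deg L=(k+m_j)\psi_j$ for every $j$, and pairing with $\omega_{\calX/B}$ gives $12k\lambda=\sum_jm_j\psi_j+(2g-2)\deg L$. Here the positivity inputs enter: for a family of smooth curves of genus $\ge1$ the relative dualizing sheaf $\omega_{\calX/B}$ is nef and the Hodge bundle is semipositive, so all $\psi_j\ge0$ and $\lambda\ge0$. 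The hypothesis $m_i\le-k$ makes $\deg L=(k+m_i)\psi_i\le0$, whereas the existence of some entry with $m_j>-k$ (forced by $\sum_jm_j=k(2g-2)\ge0$) makes $\deg L=(k+m_j)\psi_j\ge0$; hence $\deg L=0$. Substituting back, $\psi_j=0$ for every $j$ with $m_j\ne-k$, and then $12k\lambda=-k\sum_{m_j=-k}\psi_j\le0$ forces $\lambda=0$ and $\psi_j=0$ for all $j$.

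It remains to conclude that a family over a complete base with $\lambda=0$ and all $\psi_j=0$ must be constant together with its marked points, contradicting the nonconstancy of the moduli map; this last step is where I expect the main difficulty. For $g\ge2$, $\lambda=0$ forces isotriviality (vanishing of the degree of the Hodge bundle over a complete base), so after a finite base change $\calX\cong B\times C_0$, and then $\psi_j=(2g-2)\deg q_j=0$ forces the induced maps $q_j\colon B\to C_0$ to be constant, so all marked points are constant. For $g=1$ the family of smooth elliptic curves is automatically isotrivial, $\calX\cong B\times E$ after base change, but now the vanishing of $\psi_j$ carries no information; instead the $k$-canonical relation reads $\sum_jm_jq_j(b)\sim0$ in $\Pic^0(E)$, so $(q_1,\dots,q_n)$ maps $B$ into the abelian subvariety $W=\ker\big(E^n\to E,\ (x_j)\mapsto\sum_jm_jx_j\big)$ and avoids all pairwise diagonals. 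Dividing $W$ by the diagonal copy $A_0\cong E$ of simultaneous translations, which accounts exactly for the automorphisms of $(E,q_\bullet)$, the diagonals descend to an effective divisor on $W/A_0$ with finite stabilizer, hence ample, so $B$ maps to the affine complement and the moduli map is again constant. The crux is therefore controlling how the marked points can move once the underlying curve has been rigidified — in genus $1$ one must trade $\psi$-class positivity for the ampleness of the diagonal divisor on $W/A_0$ — while the role of the hypothesis $m_i\le-k$ is precisely to force $\deg L\le0$ in the numerical estimate above.
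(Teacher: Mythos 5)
Your proof is correct, and for the main case $g\ge 2$ it is in essence the paper's argument: your identity $\deg L=(k+m_j)\psi_j$ is exactly the paper's relation $(m_j+k)S_j^2=-\deg\calL$ obtained by intersecting the fiberwise equation $k\omega=\pi^*\calL+\sum m_iS_i$ with the sections, and your pairing with $\omega$ (giving $12k\lambda=(2g-2)\deg L+\sum m_j\psi_j$) plays the role of the paper's squaring of that equation. Your bookkeeping via $\psi_j\ge 0$, $\lambda\ge 0$ and strict positivity of $\lambda$ for non-isotrivial families is a slightly cleaner, uniform treatment that absorbs the paper's case split $m_j=-k$ versus $m_j<-k$ (the paper instead invokes its Lemma on $S^2\le 0$ together with ampleness of $\kappa_1=12\lambda$ on $\MM_g$, and handles the isotrivial case through the equality statement of that Lemma, which is your observation $\psi_j=(2g-2)\deg q_j$). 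Where you genuinely diverge is $g=1$: the paper argues very simply that after fixing $p_1$ at the origin of the constant fiber $E$, a nonconstant marked point traces a complete curve in $E$, hence all of $E$, and so must collide with $p_1$; your route through $W=\ker(E^n\to E)$, the quotient by the diagonal copy of $E$, and the criterion that an effective divisor on an abelian variety with finite stabilizer is ample is correct but heavier --- the points to keep straight are that $\ker(E^n\to E)$ may be disconnected (so you work on the component containing the image, where, for $n\ge 3$ and nonzero entries, every pairwise diagonal still meets that component in codimension one and the stabilizer computation forces the connected stabilizer into the diagonal), and that simultaneous translation is only the identity component of $\Aut(E)$, which is all you need for constancy in moduli. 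On the other hand your argument yields the stronger statement that these genus-one configuration spaces modulo translation are complements of ample divisors in abelian varieties, which is in the spirit of the paper's Section 3. One housekeeping item the paper makes explicit and you should too: pass to the ordered-marking (and level-structure) cover by a finite morphism so that a universal family with sections actually exists over the normalized base.
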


The case $k=1$ yields the following result. 

\begin{corollary}
Every stratum of strictly meromorphic abelian differentials or canonical divisors does not contain a complete curve. 
\end{corollary}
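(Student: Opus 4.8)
The plan is to argue by contradiction, first reducing to the projectivized stratum. Since $\HH^k(\mu)\to\PP^k(\mu)$ is a $\bbC^*$-bundle, the image of a complete curve in $\HH^k(\mu)$ is a complete curve in $\PP^k(\mu)$ unless the original curve lies in a single fibre $\bbC^*$, which is impossible; so it suffices to show $\PP^k(\mu)$ contains no complete curve. Suppose $B\subset\PP^k(\mu)$ were one. As $\PP^k(\mu)$ is a locally closed subvariety of $\MM_{g,n}$, the induced map $B\to\MM_{g,n}$ is non-constant. After a finite base change (harmless for the question) one gets a family $\pi\colon\mathcal C\to B$ of smooth genus $g$ curves with disjoint sections $\sigma_1,\dots,\sigma_n$ carrying a relative $k$-differential $\xi$, i.e.\ a rational section of $\omega_{\mathcal C/B}^{\otimes k}$ with $\divisor(\xi)=\sum_j m_j\sigma_j+\pi^*D_B$; writing $N=\OO_B(D_B)$ this gives $\omega_{\mathcal C/B}^{\otimes k}\cong\OO_{\mathcal C}(\sum_j m_j\sigma_j)\otimes\pi^*N$.

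First I would extract the local constraint at the distinguished pole. Say $m_i=-\ell$ with $\ell\ge k$, and set $\psi_j=\sigma_j^*\omega_{\mathcal C/B}$. Restricting the displayed isomorphism along each $\sigma_j$ and using $\sigma_j^*\OO(\sigma_j)\cong\psi_j^{\vee}$ together with disjointness yields $N\cong\psi_j^{\otimes(k+m_j)}$ for every $j$. Working in an étale-local fibre coordinate $z$ cutting out $\sigma_i$ and expanding $\xi=(a_{-\ell}z^{-\ell}+\cdots)(dz)^{\otimes k}$, the leading coefficient $a_{-\ell}$ is everywhere nonzero (the pole order is exactly $\ell$) and transforms by $c^{\,k-\ell}$ under $z\mapsto c\,z$, hence glues to a nowhere-vanishing section of $\psi_i^{\otimes(k-\ell)}$; this forces $\psi_i^{\otimes(\ell-k)}\cong\OO_B$. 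Combined with $N\cong\psi_i^{\otimes(k+m_i)}=\psi_i^{\otimes(k-\ell)}$ this gives $N\cong\OO_B$, whence $\psi_j^{\otimes(k+m_j)}\cong\OO_B$ for all $j$ and $\deg\psi_j=0$ whenever $k+m_j\ne0$.

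The next step is numerical. Intersecting $\omega_{\mathcal C/B}^{\otimes k}\cong\OO(\sum m_j\sigma_j)\otimes\pi^*N$ with $\omega_{\mathcal C/B}$ and using $\omega_{\mathcal C/B}\cdot\sigma_j=\deg\psi_j$, $\omega_{\mathcal C/B}\cdot\pi^*(\mathrm{pt})=2g-2$ and $\deg N=0$ gives $k\,\omega_{\mathcal C/B}^2=\sum_j m_j\deg\psi_j=-k\sum_{j:\,m_j=-k}\deg\psi_j$; since $\omega_{\mathcal C/B}^2=12\deg\lambda_B\ge0$ (the family is smooth) and every section of a smooth family of curves of genus $\ge2$ has $\deg\psi_j\ge0$, all $\deg\psi_j$ vanish and $\deg\lambda_B=0$. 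For $g\ge2$ this forces the family to be isotrivial; after a further finite base change $\mathcal C\cong C_0\times B$, and $\deg\psi_j=0$ makes each section constant, $\sigma_j=\{q_j\}\times B$. Then $\xi|_{C_0\times\{b\}}$ is, for every $b$, a rational section of $\omega_{C_0}^{\otimes k}$ with the fixed divisor $\sum_j m_j q_j$; two such differ by a scalar, so $\xi|_{C_0\times\{b\}}=c(b)\,\xi_0$ for a morphism $c\colon B\to\bbC^*$, which is constant. Hence $B\to\PP^k(\mu)$ is constant, a contradiction.

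The main obstacle is the low-genus cases, where the numerical step degenerates ($\omega_{\mathcal C/B}^2=0$ automatically) and the positivity input used for $g\ge2$ is unavailable. For $g=0$ one bypasses this since $\MM_{0,n}$ is affine, hence contains no complete curve. For $g=1$ the family is still isotrivial — a smooth family of elliptic curves over a complete base has constant $j$-invariant — but the sections genuinely move, since $\psi_j\cong\OO_B$ automatically; here one recenters by the translation automorphism carrying $\sigma_i$ to a constant section $\{0\}\times B$ (which preserves the invariant relative differential up to $\bbC^*$-scaling), and then the same frozen-germ/scalar argument applies, Abel's theorem showing the other marked points differ from $\sigma_i$ only by fixed translations. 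Making this low-genus bookkeeping precise, and verifying the isotriviality input (equivalently the positivity $\deg\lambda_B>0$ for non-isotrivial smooth families) for $g\ge2$, is where the real work lies.
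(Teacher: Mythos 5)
In the paper this corollary is just the case $k=1$ of Theorem~\ref{thm:complete} (strictly meromorphic means some $m_i\le -1=-k$), so what you have really attempted is a re-proof of that theorem, and its central step has a genuine gap. Over a complete curve $B\subset\PP^k(\mu)$ you only have the fiberwise differentials up to scale; any global rational section $\xi$ of $\omega_{\calC/B}^{\otimes k}$ realizing them necessarily carries a vertical divisor $\pi^*D_B$ with $\OO_B(D_B)\cong N$, and the leading Laurent coefficient $a_{-\ell}$ along $\sigma_i$ vanishes (or acquires poles) exactly on $D_B$: locally $\xi=u(z,t)\,t^{d}z^{-\ell}(dz)^{\otimes k}$ near a point of $\sigma_i$ lying over a point of $\Supp D_B$. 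So $a_{-\ell}$ is \emph{not} everywhere nonzero; it is a rational section of $\psi_i^{\otimes(k-\ell)}$ with divisor $D_B$, and the honest conclusion is $\psi_i^{\otimes(k-\ell)}\cong N$ --- which is exactly the restriction of your seesaw isomorphism to $\sigma_i$ and gives nothing new. Deducing $N\cong\OO_B$ this way amounts to assuming a nowhere-degenerate global family of differentials over $B$, i.e.\ assuming $N$ trivial, which is circular. (When the pole has order exactly $k$, triviality of $N$ does follow at once from $N\cong\psi_i^{\otimes(k+m_i)}$; the problem is precisely $m_i<-k$, which matters for the corollary, e.g.\ when every pole has order at least $2$.) This is where the paper argues instead by signs: from $\deg N=(k+m_j)\deg\psi_j$ for \emph{all} $j$, together with $\deg\psi_j\ge 0$ (Lemma~\ref{lem:self}), the existence of a positive entry $m_h$ (since $\sum m_j=k(2g-2)>0$ for $g\ge2$) and of an entry $\le -k$ forces $\deg N=0$. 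Your subsequent numerical step is essentially the paper's computation and is fine, so for $g\ge 2$ the gap can be closed --- but by replacing your local argument with this sign argument, not by fixing it.

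The genus-one case is also not actually handled. Abel's theorem gives only the single relation $\sum_j m_jp_j=0$ after fixing the origin; it does not make the remaining marked points ``fixed translations'' of $\sigma_i$, and the ``frozen-germ/scalar'' argument you invoke there is the flawed step above. The paper's $g=1$ argument is different and complete: constancy of the $j$-invariant (fibers of the forgetful maps are not complete), and then a marked point moving relative to a fixed one must, by completeness, sweep out all of $E$ and hence collide with it, contradicting distinctness. The remaining inputs you defer ($\deg\lambda_B>0$ for non-isotrivial smooth families, $\deg\psi_j\ge0$) are standard and correspond to the paper's use of Lemma~\ref{lem:self} and the ampleness of $\kappa_1=12\lambda$ on $\MM_g$, so they are not the issue; the circular local step and the unfinished $g=1$ case are.
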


Next we turn to the case of holomorphic abelian differentials, i.e. when $k=1$ and all entries of $\mu$ are positive. A novel idea in the study of $\MM_g$ is to construct special stratifications of $\MM_g$ (see e.g. \cite{FontanariLooijenga}). Denote by $\HH$ the Hodge bundle parameterizing holomorphic abelian differentials, which is a rank-$g$ vector bundle over $\MM_g$, and let $\PP$ be the projectivization of $\HH$ parameterizing holomorphic canonical divisors. Then the strata $\PP(\mu)$ with holomorphic signatures $\mu$ form a stratification of $\PP$. For special $\mu$, $\PP(\mu)$ can have up to three connected components, due to hyperelliptic and spin structures (\cite{KontsevichZorich}),  
and we label by $\hyp$, $\even$ and $\odd$ to distinguish these strata (components). It was asked in \cite[Problem 3.7]{Mondello} whether the strata $\PP(\mu)$ are affine. If the answer is yes in general, then it would follow that there is no complete curve in $\PP(\mu)$. 

Knowing the affinity of the strata has another implication. The group $\GL^+_2(\bbR)$ acts on the strata $\HH(\mu)$ of holomorphic abelian differentials by varying the shape of the corresponding flat surfaces. By \cite{EskinMirzakhani, EskinMirzakhaniMohammadi} (the normalizations of) all $\GL^+_2(\bbR)$-orbit closures (under the analytic topology of $\HH(\mu)$) are locally cut out by homogeneous real linear equations of period coordinates, hence they are also called affine invariant manifolds. Here the term ``affine'' is different from what it means in algebraic geometry, as it refers to the locally linear structure. Nevertheless, one can ask the following question: Are affine invariant manifolds affine varieties? More precisely, we project a $\GL^+_2(\bbR)$-orbit closure to $\PP(\mu)$ by modulo scaling of the differentials. Note that any closed subset of an affine variety is still affine (see e.g. \cite[p. 106, Theorem 3]{Mumford}), and by \cite{Filip} all $\GL^+_2(\bbR)$-orbit closures are closed subvarieties in the strata. Therefore, if $\PP(\mu)$ is affine as suspected above, it would imply that all orbit closures in $\PP(\mu)$ are affine. 

Our next result provides evidence for two typical types of orbit closures. The first type is called Teichm\"uller curves, which correspond to closed $\GL^+_2(\bbR)$-orbits in the strata. The other type arises from Hurwitz spaces of torus coverings with $n$ branch points, and the case $n=1$ yields special Teichm\"uller curves. 

\begin{theorem}
\label{thm:hurwitz}
Teichm\"uller curves and Hurwitz spaces of torus coverings are affine. 
\end{theorem}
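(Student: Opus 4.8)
The plan is to prove affineness by exhibiting each of these spaces as a finite cover of a known affine variety, or by directly producing enough regular functions (equivalently, an ample effective divisor supported on the complement of a suitable compactification). Both Teichm\"uller curves and Hurwitz spaces of torus coverings parameterize covers $f \colon C \to E$ of a fixed or varying elliptic curve $E$, so I would reduce everything to the geometry of $\mathcal{M}_{1,n}$ and the Hurwitz-space cover over it. The key structural input is that a Teichm\"uller curve, being an algebraic curve, is affine if and only if it is not complete, and by Theorem~\ref{thm:complete} together with the classical fact that strata of holomorphic differentials contain no complete curves (Diaz-type arguments, or the existence of the ``area'' Siegel--Veech-type exhaustion) it suffices to rule out completeness; alternatively one invokes that a Teichm\"uller curve always has cusps coming from the boundary of the moduli space, since the $\GL_2^+(\bbR)$-action has parabolic elements fixing it, forcing a puncture.

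**Step 1: Teichm\"uller curves.** First I would recall that a Teichm\"uller curve $\bbH/\Gamma$ has a natural algebraic structure and that its Veech group $\Gamma$ is a non-cocompact lattice in $\SL_2(\bbR)$ (it contains a parabolic because a Teichm\"uller curve is generated by a flat surface with a cylinder decomposition in some periodic direction). Non-cocompactness means the quotient has at least one cusp, hence the Teichm\"uller curve $C^\circ$ is $\overline{C} \setminus \{p_1,\dots,p_s\}$ with $s \geq 1$; a smooth affine curve is exactly a projective curve minus a nonempty finite set, so $C^\circ$ is affine. I expect this step to be essentially immediate modulo citing the existence of cusps; the only subtlety is making sure the algebraic structure on the Teichm\"uller curve agrees with the one it inherits as a subvariety of $\PP(\mu)$, which follows from the fact that the map to $\PP(\mu)$ is algebraic and finite onto its image.

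**Step 2: Hurwitz spaces of torus coverings.** For a Hurwitz space $\mathcal{H}$ parameterizing degree-$d$ covers $f\colon C \to E$ of elliptic curves with $n$ branch points and fixed ramification profile, I would use the branch morphism $\mathcal{H} \to \mathcal{M}_{1,n}$, which is finite (a fixed cover is determined up to finitely many choices by its branch divisor and monodromy data). Since a finite morphism is affine and the preimage of an affine is affine, it suffices to show $\mathcal{M}_{1,n}$ is affine. This is classical: $\mathcal{M}_{1,1}\cong \bbA^1$ (the $j$-line), and fibering $\mathcal{M}_{1,n} \to \mathcal{M}_{1,n-1}$ with fibers equal to an elliptic curve minus $(n-1)$ points (hence affine curves) and inducting, using that a morphism whose source is covered by affines mapping to an affine base with affine fibers is affine (or more carefully: $\mathcal{M}_{1,n}$ for $n \geq 1$ is affine because it admits a finite map to a product-like affine configuration space, or because its boundary in $\overline{\mathcal{M}}_{1,n}$ supports an ample divisor). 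I should be careful with stacky issues, but affineness is insensitive to passing to a finite cover or to the coarse space, and $\overline{\mathcal{M}}_{1,n}$ has projective coarse space with ample boundary (the boundary contains $\delta_{\mathrm{irr}}$ which is big and one checks nef-ness of an appropriate combination), so the complement is affine.

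**Main obstacle.** The genuinely delicate point is Step 2's input that $\mathcal{M}_{1,n}$ (or the relevant Hurwitz space directly) is affine: while $\mathcal{M}_{1,1}$ and $\mathcal{M}_{1,2}$ are visibly affine, for larger $n$ one must either carry out the fibration induction carefully (checking that each elliptic fiber really has at least one point removed, which holds because the marked points must be distinct and distinct from the origin after rigidifying) or exhibit an explicit ample effective boundary divisor on $\overline{\mathcal{M}}_{1,n}$. I anticipate the cleanest route is the fibration argument: $\mathcal{M}_{1,n} \to \mathcal{M}_{1,n-1}$ has fibers isomorphic to a smooth elliptic curve minus $n-1$ distinct points (for $n\geq 2$), which are affine curves, and a flat family of affine curves over an affine base with a section is affine by a relative $\mathrm{Proj}/\mathrm{Spec}$ argument; the base case $\mathcal{M}_{1,1} = \bbA^1_j$ completes the induction, and then pulling back along the finite branch morphism finishes the proof.
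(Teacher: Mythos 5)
Your Step 1 is fine and is essentially the paper's own argument: a Teichm\"uller curve is a non-complete algebraic curve (the $\SL_2(\bbR)$-orbit is never complete, equivalently there are cusps), and a complete curve minus a nonempty finite set of points is affine.

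Step 2, however, has a genuine gap at exactly the crucial point, namely the affineness of $\MM_{1,n}$ (or of a symmetric quotient of it). The lemma you invoke --- that a flat family of affine curves over an affine base with a section has affine total space --- is false: the projection $\bbA^2\setminus\{0\}\to\bbA^1$, $(x,t)\mapsto t$, is a flat family of affine curves over an affine base with the section $t\mapsto(t,1)$, yet $\bbA^2\setminus\{0\}$ is not affine. Affineness of a total space is not detected fiberwise; what you would need is that the forgetful morphism $\MM_{1,n}\to\MM_{1,n-1}$ is an \emph{affine morphism}, and that is precisely what requires proof, so the induction collapses. Your fallback claim --- that the boundary of $\bM_{1,n}$ supports an ample divisor --- is asserted without justification and cannot be obtained by the natural cone argument: by Chen--Coskun, for $n\geq 3$ the effective cone of $\bM_{1,n}$ is \emph{not} generated by the boundary divisors (there are infinitely many extremal non-boundary effective divisors), so an ample class need not be a positive combination of boundary classes. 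The paper gets around exactly this issue by composing the finite branch map with the quotient to obtain a finite morphism $\HH_d(\Pi)\to\MM_{1,n}/S_n$ and then invoking Chen--Coskun's Theorem 5.1, which says that the effective cone of $\bM_{1,n}/S_n$ \emph{is} spanned by the boundary divisors; since any ample class lies in the interior of the effective cone, it is a positive combination of boundary classes, so $\MM_{1,n}/S_n$ is the complement of an ample divisor in $\bM_{1,n}/S_n$, hence affine, and affineness pulls back along the finite morphism. (Once this is known, $\MM_{1,n}$ itself is affine, being finite over $\MM_{1,n}/S_n$ --- but that is an output of the symmetric-quotient argument, not something your fibration or ample-boundary claims deliver.)
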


We also show that the hyperelliptic strata and some low genus strata of holomorphic canonical divisors are affine, see Section~\ref{sec:examples}. 

\subsection*{Acknowledgements} This work was carried out in part when the author visited Seoul National University in December 2016. The author thanks David Hyeon for his invitation and hospitality. The author also thanks Qile Chen, Claudio Fontanari and Sam Grushevsky for helpful communications on related topics. 

%%%%%%%%%%%%%%%%%
\section{Complete curves in the strata}
\label{sec:complete}
%%%%%%%%%%%%%%%%%

In this section we prove Theorem~\ref{thm:complete}. The upshot is an intersection calculation, which was also used to derive a relation between area Siegel-Veech constants and sums of Lyapunov exponents for affine invariant manifolds (see \cite{ChenMoellerAbelian, ChenMoellerQuadratic, EKZ}). The reason we need a pole of order at least $k$ will become clear in the proof. Recall first the following well-known result. 

\begin{lemma}
\label{lem:self}
Suppose $\pi: \calX\to C$ is a complete one-dimensional family of smooth genus $g$ curves for $g\geq 2$, and $S$ is a section. Then the self-intersection $S^2 \leq 0$. Moreover, $S^2 = 0$ iff the family has constant moduli and $S$ is a constant section.  
\end{lemma}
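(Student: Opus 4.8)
The plan is to compute $S^2$ by adjunction on the smooth projective surface $\calX$, thereby reducing the statement to the positivity of the relative dualizing sheaf $\omega_{\calX/C}$. Since $\pi$ is smooth we have $\omega_{\calX/C} = K_\calX - \pi^*K_C$, and since $S$ maps isomorphically onto $C$ the adjunction formula $2g(C)-2 = S\cdot K_\calX + S^2$ combined with $S\cdot \pi^*K_C = \deg K_C = 2g(C)-2$ gives the identity $S^2 = -\,S\cdot\omega_{\calX/C} = -\deg\bigl(\omega_{\calX/C}|_S\bigr)$. Everything then comes down to showing that $\deg(\omega_{\calX/C}|_S)\geq 0$, with equality exactly in the asserted cases.

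For the inequality I would invoke Arakelov's theorem on families of curves of genus $g\geq 2$ over a complete curve: $\omega_{\calX/C}$ is nef, and moreover $\omega_{\calX/C}^2>0$ unless the family has constant moduli, in which case $\omega_{\calX/C}^2=0$ and a finite base change trivializes the family. (Here nefness is all one needs for the inequality; the refined positivity is reserved for the equality discussion.) Nefness gives $S\cdot\omega_{\calX/C}\geq 0$ at once, hence $S^2\leq 0$.

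For the equality case, assume $S^2=0$, i.e. $S\cdot\omega_{\calX/C}=0$. If the family were not isotrivial, then $\omega_{\calX/C}^2>0$, so by the Hodge index theorem the subspace of $N^1(\calX)_{\bbR}$ orthogonal to $\omega_{\calX/C}$ is negative definite; since $S$ lies in this subspace and is not numerically trivial (it meets a general fiber in a single point), this would force $S^2<0$, a contradiction. Hence the family has constant moduli. Pulling back along a finite cover $C'\to C$ that trivializes the family, one gets $\calX\times_C C'\cong C'\times F$ with $F$ the fiber of genus $g$, and the section becomes the graph of a morphism $f\colon C'\to F$; the same adjunction computation, now with $\omega_{C'\times F/C'}$ equal to the pullback of $K_F$, yields $(S')^2 = -(2g-2)\deg f$. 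As intersection numbers are multiplied by $\deg(C'/C)>0$ under this base change, $S^2=0$ forces $\deg f=0$, so $f$ is constant and $S$ is a constant section; the converse implication is immediate.

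I expect the main obstacle to be the careful handling of the isotrivial case rather than the inequality itself: one must produce a base change that genuinely trivializes the family (controlling the monodromy coming from automorphisms of $F$) and keep track of how the section and its self-intersection transform, so as to identify the equality locus precisely with constant moduli together with a constant section.
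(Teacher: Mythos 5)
Your proof is correct, and it is more self-contained than the paper's. The paper simply cites \cite[p.~309]{HarrisMorrison} for the inequality $S^2\leq 0$ and for strictness when the moduli are nonconstant, and then treats the constant-moduli case by composing with the projection $f:\calX\to X$ to the fixed fiber and computing $-S^2=S\cdot f^*K_X=d(2g-2)$. Your route replaces the citation by the adjunction identity $S^2=-S\cdot\omega_{\calX/C}$ together with Arakelov's results (nefness of $\omega_{\calX/C}$ for a relatively minimal fibration, automatic here since all fibers are smooth of genus $g\geq 2$, and $\omega_{\calX/C}^2>0$ in the nonisotrivial case) plus the Hodge index theorem; this gives both the weak and the strict inequality from one positivity statement instead of a black-box reference. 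In the isotrivial case your computation $(S')^2=-(2g-2)\deg f$ is at heart the same degree count as the paper's, but you run it after a finite trivializing base change, which is in fact more careful than the paper: the ``natural projection'' $\calX\to X$ used there exists only when the family is literally a product, whereas an isotrivial family is in general trivialized only after a finite \'etale cover (the Isom-scheme of $\calX$ with the constant family $C\times F$ works, since $\Aut(F)$ is finite for $g\geq 2$), and the projection formula $(S')^2=\deg(C'/C)\cdot S^2$ transfers the conclusion back. The only caveat is the one you flag yourself, and which the paper also glosses over: when the family is isotrivial but not literally trivial, ``constant section'' must be interpreted after such a trivializing cover.
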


\begin{proof}
By e.g. \cite[p. 309]{HarrisMorrison} we know that $S^2 \leq 0$, and if the family is nonconstant then $S^2 < 0$. Now suppose all fibers of $\pi$ are  isomorphic to a smooth curve $X$. Let $f: \calX \to X$ be the natural projection. If $S$ is not a constant section, then $f|_{S}$ is onto $X$, and let $d$ be the degree of this map. It follows that 
$$- S^2 = S \cdot f^* K_X = f_{*} S \cdot K_X = d (2g-2) > 0, $$
hence $S^2 < 0$. 
\end{proof}

\begin{proof}[Proof of Theorem~\ref{thm:complete}]
Since $\HH^k(\mu)$ is a $\bbC^*$-bundle over $\PP^k(\mu)$, a complete curve in $\HH^k(\mu)$ maps to a complete curve in $\PP^k(\mu)$, hence it suffices to prove the claim for $\PP^k(\mu)$. We can also assume that the zeros and poles as marked points are ordered, as it differs from the unordered setting only by a finite morphism, which does not affect the (non)existence of a complete curve in the respective moduli spaces. Suppose $C$ is a complete curve in $\PP^k(\mu)$. Let $\pi: \calX \to C$ be the universal curve and $S_1, \ldots, S_n$ the sections corresponding to the prescribed zeros and poles of order $m_1, \ldots, m_n$, respectively. Denote by $\omega$ the divisor class of the relative dualizing line bundle of $\pi$. 

Consider first the case $g\geq 2$. Since $k\omega$ and $\sum_{i=1}^n m_i S_i$ restricted to every fiber of $\pi$ both correspond to the $k$-th power of the canonical line bundle of the fiber, there exists a line bundle class $\calL$ on $C$ such that 
\begin{eqnarray}
\label{eq:pullback}
 k \omega = \pi^{*}\calL + \sum_{i=1}^n m_i S_i. 
\end{eqnarray}
Observe the following relations of intersection numbers: 
$$S_i \cdot \omega = - S_i^2, $$
$$ \pi^*\calL \cdot S_i = \deg \calL, $$
$$ S_i\cdot S_j = 0, \quad i\neq j. $$
Intersecting \eqref{eq:pullback} with $S_i$ and using the above relations, we obtain that 
\begin{eqnarray}
\label{eq:section}
 (m_i + k) S_i^2 = - \deg \calL 
 \end{eqnarray}
for all $i$. Next take the square of \eqref{eq:pullback}. Using the above relations along with $(\pi^{*}\calL)^2 = 0$, we obtain that 
\begin{eqnarray}
\label{eq:square}
k^2 \omega^2 = \sum_{i=1}^n (2m_i \deg \calL + m_i^2 S_i^2). 
 \end{eqnarray}

Suppose there exists some $m_j \leq -k$. We discuss separately the cases $m_j = -k$ and $m_j < -k$. If $m_j = -k$, then by \eqref{eq:section} we have $\deg \calL = 0$, and \eqref{eq:square} reduces to $k^2\omega^2 = \sum_{i=1}^n m_i^2 S_i^2$. If
$C$ is not a constant family, by Lemma~\ref{lem:self} we know that $S_i^2 < 0$ for all $i$. On the other hand, $\omega^2$ equals the first $\kappa$-class (also equal to $12\lambda$), which is ample on $\MM_g$ and hence has positive degree on $C$, leading to a contradiction. If the family is constant, then there exists some nonconstant section $S_h$, for otherwise $C$ would parameterize a single point in $\PP^k(\mu)$, hence $S_h^2 < 0$ by Lemma~\ref{lem:self}. Moreover by \eqref{eq:section} and $\deg \calL = 0$, we have $m_h = -k$. Since $S_j^2 \leq 0$ for all $j$, we obtain that $\omega^2 \leq S_h^2 < 0$, contradicting that $\omega^2 = 0$ on a constant family. 

Now suppose $m_i \neq -k$ for all $i$ and there exists some $m_j < -k$. 
Since $\sum_{i=1}^n m_i = k(2g-2) > 0$, there exists some $m_h > 0$. 
Since $S_i^2 \leq 0$ for all $i$, $m_j + k < 0$ and $m_h + k > 0$, it follows from \eqref{eq:section} that $\deg \calL = 0$ and $S_i^2 = 0$ for all $i$. By 
Lemma~\ref{lem:self}, $C$ parameterizes a constant family and all sections $S_i$ are constant, leading to a contradiction. 

For $g=1$, $\PP^k(\mu)$ parameterizes pointed genus one curves $(E, p_1, \ldots, p_n)$ such that $\sum_{i=1}^n m_i p_i$ is the trivial divisor class. Since the fibers of the forgetful map $\MM_{1,n} \to \MM_{1,n-1}$ are not complete for all $n$ and $\MM_{1,1}$ is not complete, a complete curve $C$ in $\PP^k(\mu)$ must parameterize constant $j$-invariant, i.e. $C$ is a complete one-dimensional family of $n$ distinct points $p_1, \ldots, p_n$ on a fixed genus one curve $E$ such that $\sum_{i=1}^n m_i p_i$ is trivial. Fix $p_1$ at the origin so that $(E, p_1)$ becomes an elliptic curve. Then there exists some other $p_i$, say $p_2$, such that $p_2$ varies in $E$ with respect to $p_1$. Since the family is complete, the locus $p_2$ traces out in $(E, p_1)$ is a complete one-dimensional subcurve, which must be $E$ itself. It follows that $p_2$ will meet $p_1$ at some point, contradicting that they are distinct over the entire family. 

Finally for $g=0$, note that $\PP^k(\mu)$ is isomorphic to $\MM_{0,n}$ which is affine, hence it does not contain any complete curve. 
\end{proof}

If $\omega$ is a $k$-differential on $C$, then there exists a canonical cyclic $k$-cover $\pi: \wt{C}\to C$ and an abelian differential $\wt{\omega}$ on $\wt{C}$ such that 
$\pi^{*}\omega = \wt{\omega}^k$ (see e.g.~\cite{BCGGM2}), where $\wt{\omega}$ is determined up to the choice of a $k$-th root of unity. A singularity of $\omega$ of order $m$ gives rise to singularities of $\wt{\omega}$ of order $(m+k)/\gcd (m, k) - 1$. Therefore, if $\HH^k(\mu)$ has all entries $> -k$, then via this covering construction it lifts (as an unramified $k$-cover) into a stratum $\HH(\wt{\mu})$ of holomorphic abelian differentials (here we allow some entries of $\wt{\mu}$ to be zero). In particular, a complete curve in $\HH^k(\mu)$ lifts to a union of complete curves in $\HH(\wt{\mu})$. Combining with Theorem~\ref{thm:complete}, we thus conclude the following result. 

\begin{proposition}
If the strata of holomorphic abelian differentials (or canonical divisors) do not contain any complete curve, then the strata of $k$-differentials (or $k$-canonical divisors) do not contain any complete curve for all $k$. 
\end{proposition}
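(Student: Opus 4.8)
The plan is to bootstrap from Theorem~\ref{thm:complete} via the canonical cyclic $k$-cover, transferring a hypothetical complete curve in a stratum of $k$-differentials into a holomorphic stratum, where the hypothesis applies. So suppose, for contradiction, that $\HH^k(\mu)$ (resp.\ $\PP^k(\mu)$) contained a complete curve $B$. If $\mu$ has an entry $m_i\le -k$ we are done already by Theorem~\ref{thm:complete}, so we may assume every entry satisfies $m_i>-k$. I will run the argument for $\PP^k(\mu)$; the case of $\HH^k(\mu)$ is word for word the same after replacing ``$k$-canonical divisor'' by ``$k$-differential''.

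Let $\pi:\calX\to B$ be the universal family, with its relative $k$-canonical structure along the tautological sections $S_1,\dots,S_n$. The first step is to globalize the cyclic cover: locally on $B$ a trivializing section $\xi$ of $\omega_{\calX/B}^{\otimes k}$ cuts out a $\bbZ/k$-cover of $\calX$, these covers agree on overlaps up to canonical isomorphism, and they glue to a cover whose normalization I denote $\wt\calX\to B$. On each fiber $\wt\calX_b$ is the normalization of a cyclic cover of the smooth curve $\calX_b$ branched only over the marked points, hence a smooth curve, so $\wt\calX\to B$ is again a family of smooth curves, carrying a relative differential $\wt\omega$ with $\wt\omega^{k}$ descending to the given $k$-canonical section. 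The crucial local computation is the order count quoted above: over a zero or pole of order $m$ the differential $\wt\omega$ has order $(m+k)/\gcd(m,k)-1$, and since $\gcd(m,k)$ divides $m+k$ while $m+k>0$, this is a nonnegative integer. Thus $\wt\omega$ is holomorphic with a well-defined holomorphic signature $\wt\mu$ (allowing zeros), and after a finite base change $\wt B\to B$ absorbing the ambiguity of $\wt\omega$ up to a $k$-th root of unity and the monodromy in the labelling of its zeros, the family defines a morphism $\wt B\to\PP(\wt\mu)$. Being finite over the complete curve $B$, the curve $\wt B$ is complete.

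Now the image of $\wt B$ in $\PP(\wt\mu)$ is one-dimensional: a point of it recovers a point of $\PP^k(\mu)$ by dividing by the deck group $\bbZ/k$, so that image maps with finite fibers onto $B$, which is a curve. It remains to eliminate vanishing entries of $\wt\mu$. If $\wt\mu$ has an entry $0$, the forgetful map $\PP(\wt\mu)\to\PP(\wt\mu^{\circ})$ dropping that marked point has fibers that are a smooth projective curve minus a nonempty finite set, i.e.\ affine curves, which contain no complete curve; hence our complete curve maps onto a complete curve downstairs, and iterating we arrive at a stratum of holomorphic canonical divisors with no vanishing entries still containing a complete curve, contradicting the hypothesis. (For $g\le 1$ there is nothing to prove, since the proof of Theorem~\ref{thm:complete} already excludes complete curves in $\PP^k(\mu)$ unconditionally; and a cyclic cover of a curve of genus $\ge 2$ again has genus $\ge 2$, so in the case $g\ge 2$ the target stratum is a genuine one over curves of genus $\ge 2$.)

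The step I expect to demand the most care is the globalization of the cyclic-cover construction in families: making precise the $\bbZ/k$-cover determined by a relative section of $\omega_{\calX/B}^{\otimes k}$ with prescribed poles, verifying that normalization yields a family of smooth curves over $B$, and taming the root-of-unity and labelling monodromy by a finite --- hence still complete --- base change. By contrast, the order count at the branch points and the removal of vanishing entries are routine.
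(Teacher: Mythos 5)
Your proposal follows essentially the same route as the paper: split off the case of an entry $\le -k$ via Theorem~\ref{thm:complete}, and otherwise use the canonical cyclic $k$-cover of \cite{BCGGM2} with the order count $(m+k)/\gcd(m,k)-1\ge 0$ to lift a complete curve into a stratum of holomorphic abelian differentials (with possible zero entries), contradicting the hypothesis. Your extra care with the family-level construction, the finite base change, the forgetful maps removing zero entries, and the low-genus cases fills in details the paper leaves implicit, but it is the same argument.
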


The study of complete subvarieties in the strata of holomorphic abelian differentials is more challenging and requires some new idea. Even for the minimal (non-hyperellitpic) strata $\PP(2g-2)$, three decades ago Harris \cite[p. 413]{Harris} asked about the existence of complete families of subcanonical points, and it remains open as far as the author knows. We plan to treat this question in future work.   

%%%%%%%%%%%%%%%%%%%
\section{Affine invariant manifolds}
%%%%%%%%%%%%%%%%%%%

In this section we prove Theorem~\ref{thm:hurwitz}. The case of Teichm\"uller curves follows from the fact that $\SL_2(\bbR)$-orbits are never complete (see 
e.g. \cite[Proposition 3.2]{WrightLectures}), as the $\SL_2(\bbR)$-action can make a saddle connection arbitrarily short so that the underlying surface breaks in the end. Since any complete algebraic curve minus finitely many points is affine, we conclude that Teichm\"uller curves are affine. 

For the case of Hurwitz spaces, the idea of the proof is motivated by \cite{Fontanari}, which showed that the top stratum of Arbarello's Weierstrass flag of $\MM_g$ is affine (though \cite{ArbarelloMondello} showed that the rest strata of the Weierstrass flag are almost never affine). We set up some notation first. Let $S$ be a subgroup of the symmetric group $S_n$ which acts on $\MM_{1,n}$ by permuting the $n$ markings. In particular, $\MM_{1,n} / S_n$ is the moduli space of smooth genus one curves with $n$ unordered marked points. Let $\HH_d(\Pi)$ be the Hurwitz space of degree $d$ connected covers of genus one curves with a given ramification profile $\Pi$, that is, the number of branch points and the ramification type over each branch point are fixed. 
There is a finite morphism $f: \HH_d(\Pi) \to \MM_{1,n}/S$ for some subgroup $S$ of $S_n$ which permutes branch points of the same ramification type, and the degree of $f$ is the corresponding Hurwitz number. It remains to prove the following. 

\begin{theorem}
The Hurwitz space $\HH_d(\Pi)$ is affine.  
\end{theorem}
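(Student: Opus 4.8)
The plan is to reduce the affinity of $\HH_d(\Pi)$ to the affinity of a suitable base together with the fact that the morphism $f$ is finite, using the standard principle that the target of a finite surjective morphism from an affine variety is affine only in the wrong direction — so instead I will argue that the \emph{source} is affine by exhibiting $\MM_{1,n}/S$ itself (or a variety finite over it) as affine and invoking the finiteness of $f$ the other way. Concretely, recall that if $f\colon X \to Y$ is a finite morphism and $Y$ is affine, then $X$ is affine (this is the correct direction: $f_*\OO_X$ is a coherent sheaf of algebras on affine $Y$, so $X = \operatorname{Spec} f_*\OO_X$ is affine). Thus it suffices to show that $\MM_{1,n}/S$ is affine. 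Since $S \subseteq S_n$ is a finite group acting on the quasi-projective variety $\MM_{1,n}$, the quotient map $\MM_{1,n} \to \MM_{1,n}/S$ is finite, and a variety is affine if and only if its quotient by a finite group is affine (again using $f_*\OO$); so in fact it suffices to prove that $\MM_{1,n}$ is affine.

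The heart of the argument is therefore: \emph{$\MM_{1,n}$ is an affine variety for all $n \geq 1$}. I would prove this by induction on $n$. The base case $n=1$ is classical: $\MM_{1,1} \cong \bbA^1$ via the $j$-invariant (as a coarse space; on the level of stacks one has $\MM_{1,1}$ with coarse space $\bbA^1$, and for the purpose of ``does it contain a complete curve / is its coarse space affine'' this is what we need — and indeed $\HH_d(\Pi)$ is being treated as a variety via $f$). For the inductive step, consider the forgetful morphism $\MM_{1,n} \to \MM_{1,n-1}$ forgetting the last marked point. Its fiber over a pointed curve $(E, p_1, \dots, p_{n-1})$ is $E \setminus \{p_1, \dots, p_{n-1}\}$, an affine curve; moreover the morphism is visibly an open immersion into the universal curve $\calX_{n-1} \to \MM_{1,n-1}$ with complement the $n-1$ universal sections plus the locus where the new point collides with an old one. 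So I would instead argue directly: $\MM_{1,n}$ embeds as an open subset of $\calX^{\times_{\MM_{1,1}} (n-1)}$ (the $(n-1)$-fold fibered self-product of the universal elliptic curve), and then show this open subset is affine because its complement is an ample (or at least big-and-support-of-an-effective-ample) divisor — the diagonals and the zero section, each of which meets every complete curve in the total space. The cleanest incarnation: fix the origin $p_1$ to rigidify, so $\MM_{1,n}$ fibers over $\MM_{1,1}$ with fiber the configuration space $F_{n-1}(E \setminus \{0\})$ of $n-1$ distinct points on the affine curve $E \setminus \{0\}$; since $E \setminus \{0\}$ is affine and ordered configuration spaces of affine curves are affine (the configuration space of $m$ points on an affine variety $U$ is the complement of the diagonals in $U^m$, and for a curve these diagonals are divisors whose union supports an ample divisor when $U$ is affine), the total space is finite over... no — one fibers it over the affine base $\MM_{1,1}$ with affine fibers \emph{and} an explicit quasi-projective model, and concludes affineness by a relative Serre criterion (vanishing of higher cohomology of coherent sheaves).

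The main obstacle, and the step I expect to require the most care, is making the complement-is-ample argument precise: ``the complement of $\MM_{1,n}$ in a natural projective-over-$\MM_{1,1}$ compactification is the support of an effective divisor that is ample (or ``affine-supporting'')'' must be checked, because on the universal elliptic curve the sections and fiberwise diagonals have self-intersection zero on each fiber, so they are not obviously ample — one needs to combine them with a positive multiple of a fiber class (pulled back from the affine base) to get ampleness, exactly the kind of bookkeeping the author suppresses. An alternative that sidesteps this: use Fontanari's method directly (as the author signals) — exhibit $\HH_d(\Pi)$, via $f$, as a finite cover of $\MM_{1,n}/S$, exhibit the latter as finite over $\MM_{1,n}$, and then prove $\MM_{1,n}$ affine by the cohomological criterion $H^i(\MM_{1,n}, \calF) = 0$ for $i>0$ and all coherent $\calF$, reducing inductively via the affine morphisms $\MM_{1,n} \to \MM_{1,n-1}$ (a composition of the open immersion into the universal curve with an affine-morphism argument) down to $\MM_{1,1} \cong \bbA^1$. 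I would present the induction-plus-affine-morphism version, since it avoids intersection-theoretic positivity entirely and only uses that a morphism whose fibers are affine curves obtained by deleting sections from a family of proper curves is an affine morphism, which follows because deleting an effective relatively-ample divisor from a projective family yields an affine morphism.
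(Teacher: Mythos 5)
Your argument takes a genuinely different route from the paper's. The paper composes $f$ with $\MM_{1,n}/S \to \MM_{1,n}/S_n$ and then proves affineness of $\MM_{1,n}/S_n$ in one step, using the Chen--Coskun theorem that the effective cone of $\BM_{1,n}/S_n$ is spanned by the boundary divisors, so that any ample class is a positive combination of boundary classes and $\MM_{1,n}/S_n$ is the complement of an ample divisor in a projective variety. You instead reduce (correctly: finite over affine is affine, and a finite quotient of an affine variety is affine via the invariant ring --- note your parenthetical ``$f_*\OO$'' justifies the wrong direction of that equivalence) to the stronger claim that $\MM_{1,n}$ itself is affine, proved by induction along the forgetful maps. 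This is viable and buys something: it avoids the effective-cone input entirely and yields affineness of the ordered space $\MM_{1,n}$, not just of the symmetrized quotient; the paper's route is shorter but leans on a nontrivial external result. Two points in your write-up need to be firmed up. First, the positivity you leave ``to be checked'' is actually immediate in the version you finally commit to: the union of the $n-1$ sections in the universal genus-one curve has relative degree $n-1>0$, and any effective divisor of positive degree on a smooth genus-one curve is ample, so this divisor is relatively ample and its complement is an affine morphism over the base (over an affine base, relatively ample means ample, and the complement of an effective ample divisor is affine); your earlier worry about self-intersections being zero, and the suggestion to add a fiber class pulled back from the base, is misplaced, since a pullback class cannot affect ampleness restricted to fibers. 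Second, there is no universal curve over the coarse spaces $\MM_{1,n-1}$ because of the elliptic involution, so the induction must be run either at the level of stacks or, more simply, after adding a level-$\ell$ structure ($\ell \geq 3$) so that fine moduli schemes and universal curves exist; one then recovers $\MM_{1,n}$, $\MM_{1,n}/S$, and finally $\HH_d(\Pi)$ via finite morphisms, using Chevalley's theorem (finite surjective image of an affine scheme is affine) and finiteness over an affine base. With these routine repairs and the base case $\MM_{1,1}\cong\bbA^1$, your induction gives a complete proof.
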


\begin{proof}
The composition $\HH_d(\Pi) \to \MM_{1,n}/S \to \MM_{1,n}/S_n$ remains to be a finite morphism, hence it suffices to show that $\MM_{1,n}/S_n$ is affine (see e.g. \cite[Chapter II, Corollary 1.5]{Hartshorne}). By \cite[Theorem 5.1]{ChenCoskun} the cone of effective divisors of the Deligne-Mumford compactification $\BM_{1,n}/S_n$ is spanned by the boundary divisors, hence the class of an ample divisor on $\BM_{1,n}/S_n$ must be a positive linear combination of the boundary divisor classes, as any ample divisor class lies in the interior of the effective cone. Therefore, $\MM_{1,n}/S_n$ is affine as the complement of an ample divisor in $\BM_{1,n}/S_n$. 
\end{proof}

\begin{remark}
If $\HH_d(\Pi)$ is reducible, the above proof applies to each connected component of $\HH_d(\Pi)$, hence every connected component of $\HH_d(\Pi)$ is affine. 
\end{remark}

\begin{remark}
\label{rem:example}
Except Teichm\"uller curves and Hurwitz spaces of torus coverings, $\GL_2^+(\bbR)$-orbit closures (other than the strata) are quite rare to find. Recently a totally geodesic Teichm\"uller surface was discovered in \cite{MMW}, which arises from the flex locus $F\subset \MM_{1,3}/S_3$. Since $\MM_{1,3}/S_3$ is affine, so is the closed subset $F$. 
\end{remark}

%%%%%%%%%%%%%%%%%%%
\section{Affine strata}
\label{sec:examples}
%%%%%%%%%%%%%%%%%%%

It is known as folklore to experts that the hyperelliptic strata and a number of low genus strata of holomorphic canonical divisors are affine (see \cite[Section 3.8]{Mondello}). For the sake of completeness, in this section we provide some detailed proofs. In addition, we reduce the affinity of strata of $k$-canonical divisors to the case of canonical divisors, see Proposition~\ref{prop:k-affine}. 

%%%%%%%%%%%%%
\subsection{The hyperelliptic strata}
%%%%%%%%%%%%%

The strata $\PP(2g-2)^{\hyp}$ and $\PP(g-1,g-1)^{\hyp}$ parameterize hyperelliptic curves with a marked Weierstrass point and a pair of distinct hyperelliptic conjugate points, respectively. For $a, b > 0$, let $\MM_{0, a;b}$ be the moduli space of smooth rational curves with $a$ ordered marked points and $b$ unordered marked points. Identifying a genus $g$ hyperelliptic curve with 
a smooth rational curve marked at the $2g+2$ branch points, it follows that $\PP(2g-2)^{\hyp}\cong \MM_{0, 1;2g+1}$ and $\PP(g-1,g-1)^{\hyp}\cong \MM_{0, 1;2g+2}$.
Let $\MM_{0, n}/S_n$ be the moduli space of smooth rational curves with $n$ unordered marked points, which is isomorphic to $\bbP^{n-3}$ minus a union of hypersurfaces, hence 
$\MM_{0, n}/S_n$ is affine. There is a finite morphism $\MM_{0, a;b} \to \MM_{0, a+b}/S_{a+b}$ which forgets the orders of the markings, hence $\MM_{0, a;b}$ is affine. In particular, $\PP(2g-2)^{\hyp}$ and $\PP(g-1,g-1)^{\hyp}$ are affine. 

%%%%%%%%%%%%%
\subsection{The stratum $\PP(4)^{\odd}$}
%%%%%%%%%%%%%

The first non-hyperelliptic stratum is $\PP(4)^{\odd}$ in genus three, which parameterizes non-hyperelliptic genus three curves $C$ with a hyperflex $p$, i.e., $4p\sim K_C$. Since $C$ is non-hyperelliptic, its canonical embedding is a smooth plane quartic. Then there exists a line $L$ tangent to $C$ at $p$ with contact order four. Projecting $C$ through $p$ to a general line induces a triple cover of $C$ to $\bbP^1$. By Riemann-Hurwitz, there exists (at least) two other lines $L_1$ and $L_2$ through $p$, which are tangent to $C$ at $p_1$ and $p_2$, respectively. Consider the resulting point-line configuration in $\bbP^2$: $(p \in L, L_1, L_2; \ p_1 \in L_1; \ p_2 \in L_2)$ 
where $p, p_1, p_2$ are distinct points and $L, L_1, L_2$ are distinct lines. It is easy to check that $\GL(3)$ acts on the set of such configurations transitively. 

Let $[X, Y, Z]$ be the homogeneous coordinates of $\bbP^2$. Without loss of generality, we may assume that $L$ is defined by $Y = 0$, $L_1$ is defined by $X = 0$, $L_2$ is defined by $X - Y = 0$, $p = [0, 0, 1]$, $p_1 = [0,1,0]$ and $p_2 = [1,1,0]$. The subgroup $G$ of $\GL(3)$ preserving this configuration consists of elements of the form $\diag (\mu, \mu, \lambda)$ where $\mu, \lambda \in \bbC^{*}$. Let $F = \sum^{i+j+k=4}_{i,j,k\geq 0} a_{ijk} X^i Y^j Z^k$ be the defining equation of $C$. The tangency conditions imposed by the point-line configuration imply that 
$$ a_{004} = a_{103} = a_{202} = a_{301} = 0, $$
$$ a_{040} = a_{031} = 0, $$
$$ a_{400} + a_{310} + a_{220} + a_{130} = 0, $$
$$ a_{301} + a_{211} + a_{121} + a_{031} = 0 $$
which impose eight independent conditions to the space $\bbP^{14}$ of all plane quartics. We can take $a_{400}, a_{310}, a_{220}, a_{211}, a_{121}, a_{022}, a_{013}$ as free parameters and $G$ acts on them by taking $a_{ijk}$ to $\mu^{i+j}\lambda^k a_{ijk}$. Note that if $a_{400} = 0$, then $F$ is divisible by $Y$, and $C$ would be reducible. Since affinity is preserved under finite morphisms and $G$ takes $a_{400}$ to $\mu^4 a_{400}$, after a degree four base change we can assume that $a_{400} = 1$ and $\mu = 1$. Then the space of remaining parameters with the $G$-action can be identified with $\bbC^2 \times (\bbC^4\setminus \{ 0\} / G)$ minus a (possibly reducible) hypersurface which parameterizes singular quartics, where the first term $\bbC^2$ parameterizes $a_{310}$ and $a_{220}$, the second term $\bbC^4$ parameterizes $(a_{211}, a_{121}, a_{022}, a_{013})$ (not all zero for otherwise $C$ would be reducible) and $G$ acts on it by taking $a_{ijk}$ to $\lambda^k a_{ijk}$ for $\lambda \in \bbC^{*}$. Since $(\bbC^4\setminus \{ 0\} / G)$ is a weighted projective space, the complement of a hypersurface is affine. Therefore, up to the choice of the two specified tangent lines $L_1$ and $L_2$, $\PP(4)^{\odd}$ can be realized as the image of a finite morphism from an affine variety, hence it is affine.  

%%%%%%%%%%%%%%%%%
\subsection{The stratum $\PP(3,1)$}
%%%%%%%%%%%%%%%%%

This stratum parameterizes non-hyperelliptic genus three curves $C$ with two distinct points $p$ and $q$ such that $3p + q \sim K_C$. In terms of the canonical embedding of $C$ as a smooth plane quartic, there exists a line $L$ tangent to $C$ at $p$ with contact order three, and $q$ is determined by the other intersection point of $L$ with $C$. Take the same point-line configuration as in the previous case to impose tangency conditions to $C$. Then we have 
$$ a_{004} = a_{103} = a_{202} = 0, $$
$$ a_{040} = a_{031} = 0, $$
$$ a_{400} + a_{310} + a_{220} + a_{130} = 0, $$
$$ a_{301} + a_{211} + a_{121} + a_{031} = 0 $$
which impose seven independent conditions to the space $\bbP^{14}$ of all plane quartics. We can take $a_{130}, a_{220}, a_{310}, a_{301}, a_{211}, a_{121}, a_{022}, a_{013}$ as free parameters and $G$ acts on them by taking $a_{ijk}$ to $\mu^{i+j}\lambda^k a_{ijk}$. Note that if $a_{130} = 0$, then $C$ would be singular at $p_1 = [0,1,0]$. Since affinity is preserved under finite morphisms and $G$ takes $a_{130}$ to $\mu^4 a_{130}$, after a degree four base change we can assume that $a_{130} = 1$ and $\mu = 1$. Then the space of remaining parameters with the $G$-action can be identified with $\bbC^2 \times (\bbC^5\setminus \{ 0\} / G)$ minus a union of hypersurfaces which parameterize singular quartics and quartics that have contact order four to $L$ at $p$, where the first term $\bbC^2$ parameterizes $a_{220}$ and $a_{310}$,  the second term $\bbC^5$ parameterizes $a_{301}, a_{211}, a_{121}, a_{022}, a_{013}$ (not all zero for otherwise $C$ would be reducible) and $G$ acts on it by taking $a_{ijk}$ to $\lambda^k a_{ijk}$ for $\lambda \in \bbC^{*}$. Since $(\bbC^5\setminus \{ 0\} / G)$ is a weighted projective space, the complement of hypersurfaces is affine. Therefore, up to the choice of the two specified tangent lines $L_1$ and $L_2$, $\PP(3,1)$ can be realized as the image of a finite morphism from an affine variety, hence it is affine. 

%%%%%%%%%%%%%%%%%
\subsection{The stratum $\PP(2,2)^{\odd}$}
%%%%%%%%%%%%%%%%%

This stratum parameterizes non-hyperelliptic genus three curves $C$ with two distinct points $p_1$ and $p_2$ such that $2p_1 + 2p_2 \sim K_C$. In terms of the canonical embedding of $C$ as a smooth plane quartic, there exists a line $L$ tangent to $C$ at $p_1$ and $p_2$. Since every smooth plane quartic has $28$ distinct bitangent lines, let $M$ be another bitangent line that intersects $C$ at $q_1$ and $q_2$. 
Denote by $r$ the intersection of $L$ with $M$. Note that $r$ is distinct from $p_i$ and $q_j$. Consider the resulting point-line configuration in $\bbP^2$: 
$(p_1, p_2 \in L; \ q_1, q_2 \in M;\ r = L\cap M )$,  
where $p_1, p_2, q_1, q_2, r$ are distinct points and $L, M$ are distinct lines. It is easy to check that $\GL(3)$ acts on the set of such configurations transitively. Without loss of generality, we may assume that $L$ is defined by $Y = 0$, $M$ is defined by $Z = 0$, $r = [1, 0, 0]$, $p_1 = [0,0,1]$, $p_2 = [1,0,1]$, $q_1 = [0,1,0]$ and $q_2 = [1,1,0]$. 
 The subgroup $G$ of $\GL(3)$ preserving this configuration consists of scalar matrices only. Following the previous notation, a routine calculation shows that these bitangency requirements impose eight linearly independent conditions to the coefficients $a_{ijk}$, hence in the total space of plane quartics these conditions cut out a subspace $\bbP^6$ which contains a hypersurface of singular quartics. Therefore, up to the choice of the two specified bitangent lines, $\PP(2,2)^{\odd}$ can be realized as the image of a finite morphism from an affine variety, hence it is affine. 

\begin{remark}
For the two remaining strata $\PP(2,1,1)$ and $\PP(1,1,1,1)$ in genus three, they both contain hyperelliptic curves whose canonical maps are double covers of plane conics, hence the plane quartic model does not directly apply to these cases. Nevertheless, one can use the above method to show that the complements of the hyperelliptic locus
in these two strata are affine.   
\end{remark}

%%%%%%%%%%%%%%%%%
\subsection{The stratum $\PP(6)^{\even}$}
%%%%%%%%%%%%%%%%%

This stratum parameterizes non-hyperelliptic genus four curves $C$ with a point $p$ such that $6p \sim K_C$ and $h^0(C, 3p) = 2$. In terms of the canonical embedding of $C$ 
in $\bbP^3$, there exists a unique quadric cone $Q$ containing $C$ such that a ruling $L$ of $Q$ cuts out $3p$ with $C$. Let $v$ be the vertex of $Q$. There exists (at least) two other rulings $L_1$ and $L_2$ through $v$ such that $L_i$ is tangent to $C$ at $p_i$ for $i = 1, 2$. Consider the resulting configuration: $(p \in L, p_1 \in L_1, p_2 \in L_2; \ L, L_1, L_2 \subset Q)$, where $p, p_1, p_2$ are distinct points from $v$ and $L, L_1, L_2$ are distinct lines. It is easy to check that $\GL(4)$ acts on the set of such configurations transitively. Let $[X, Y, Z, W]$ be the homogeneous coordinates of $\bbP^3$. Without loss of generality, we may assume that $Q$ is defined by $XY - Z^2 = 0$, $L$ is defined by 
$X = Y = Z$, $L_1$ is defined by $Y= Z = 0$, $L_2$ is defined by $X = Z = 0$, $p = [1,1,1,0]$, $p_1 = [1, 0, 0, 0]$ and $p_2 = [0,1,0,0]$. The subgroup $G$ of $\GL(4)$ preserving this configuration consists of elements of the form $\diag( \lambda,  \lambda,  \lambda, \mu)$ 
where $\lambda, \mu \in \bbC^{*}$. Let $F = \sum_{i+j+k=3} a_{ijk} X^i Y^j W^k + Z \sum_{i+j+k=2} b_{ijk}X^i Y^j W^k$ be the cubic equation that cuts out $C$ in $Q$.
A routine calculation shows that these tangency requirements impose seven linearly independent conditions to the coefficients $a_{ijk}$ and $b_{ijk}$: 
$$ a_{300} = a_{201} = a_{030} = a_{021} = 0, $$
$$ a_{210} + a_{120} + b_{200} + b_{110} + b_{020} = 0, $$
$$a_{111} + b_{101} + b_{011} = 0, $$
$$a_{102} + a_{012} + b_{002} = 0. $$ 
We can take $a_{003}, a_{120}, b_{200}, b_{110}, b_{020}, b_{101}, b_{011}, a_{012}, b_{002}$ as free parameters and $G$ acts on them by taking $a_{ijk}$ and $b_{ijk}$ to $\lambda^{i+j} \mu^k a_{ijk}$ and $\lambda^{i+j+1} \mu^k b_{ijk}$, respectively. Since $C$ does not contain $v$, it implies that $a_{003} \neq 0$. After a degree three base change we can assume 
that $a_{003}= 1$ and $\mu = 1$. Then the space of remaining parameters with the $G$-action can be identified with a weighted projective space $\bbC^8\setminus \{ 0\} / G$, where it contains a hypersurface parameterizing singular curves and $G$ takes $a_{ijk}$ and $b_{ijk}$ to $\lambda^{i+j} a_{ijk}$ and $\lambda^{i+j+1} b_{ijk}$, respectively. Therefore, up to the choice of the specified rulings, $\PP(6)^{\even}$ can be realized as the image of a finite morphism from an affine variety, hence it is affine. 

\begin{remark}
For the other strata in genus four, they may contain both hyperelliptic and non-hyperelliptic curves, or both Gieseker-Petri general and Gieseker-Petri special curves, hence one cannot use a single quadric surface to treat them. 
\end{remark}

%%%%%%%%%%%%%%%%%
\subsection{Strata of $k$-canonical divisors}
%%%%%%%%%%%%%%%%%

One can similarly ask if the strata of (possibly meromorphic) $k$-canonical divisors $\PP^k(\mu)$ are affine. Via the canonical cyclic $k$-cover discussed at the end of Section~\ref{sec:complete}, $\PP^k(\mu)$ lifts into a stratum of canonical divisors $\PP(\wt{\mu})$ as a closed subset. We thus conclude the following result. 

\begin{proposition}
\label{prop:k-affine}
If the strata of canonical divisors are affine, then the strata of $k$-canonical divisors are affine for all $k$. 
\end{proposition}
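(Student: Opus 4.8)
The plan is to use the canonical cyclic $k$-cover construction described at the end of Section~\ref{sec:complete}, which realizes $\PP^k(\mu)$ as a subvariety of a stratum $\PP(\wt{\mu})$ of (ordinary) canonical divisors, together with the elementary fact that a closed subvariety of an affine variety is affine. So first I would recall the construction: given $(C, \xi) \in \HH^k(\mu)$, there is a canonical cyclic $k$-cover $\pi\colon \wt{C}\to C$ together with an abelian differential $\wt{\omega}$ on $\wt{C}$ with $\pi^*\xi = \wt{\omega}^k$, and a singularity of $\xi$ of order $m$ produces singularities of $\wt{\omega}$ of order $(m+k)/\gcd(m,k) - 1$; this determines the signature $\wt{\mu}$ (allowing zero entries when $k\mid m$). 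One subtlety is that $\wt{\omega}$ is only well-defined up to a $k$-th root of unity, so strictly speaking the lift is to the projectivized stratum $\PP(\wt{\mu})$ (or to $\HH(\wt{\mu})$ modulo a finite group), which is exactly what we want since we are proving affinity of $\PP^k(\mu)$.

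Next I would observe that this assignment $(C,\xi)\mapsto (\wt{C}, \wt{\omega})$ is algebraic and defines a morphism $\PP^k(\mu)\to \PP(\wt{\mu})$ (after passing to the stratum with appropriately ordered zeros and poles, which only changes things by a finite morphism and does not affect affinity). The key point is that this morphism is a locally closed immersion whose image is in fact \emph{closed} in $\PP(\wt{\mu})$: the image consists of exactly those $(\wt{C}, \wt{\omega})$ that carry a faithful $\bbZ/k$-action under which $\wt{\omega}$ transforms by a primitive character and whose quotient differential has the prescribed orders — these are closed conditions in the stratum. Granting that the image is closed, if $\PP(\wt{\mu})$ is affine then so is the closed subvariety, hence $\PP^k(\mu)$ is affine (being a finite cover away from the ordering issue, and using that affinity descends and ascends along finite morphisms, e.g.~\cite[Chapter II, Corollary 1.5]{Hartshorne} as invoked earlier).

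The main obstacle I anticipate is verifying that the image of $\PP^k(\mu)$ in $\PP(\wt{\mu})$ is genuinely closed rather than merely locally closed — one needs to know that a limit of cyclic $k$-covers inside the smooth stratum $\PP(\wt{\mu})$ (so the limit curve is still smooth) remains a cyclic $k$-cover with the right quotient data, i.e.~that the $\bbZ/k$-action does not degenerate. This follows because the automorphism datum extends over the limit as long as the curve stays smooth, and the character by which $\wt\omega$ transforms is locally constant. Once this is in hand, the proof is short: the paper's statement already records it as ``$\PP^k(\mu)$ lifts into a stratum of canonical divisors $\PP(\wt{\mu})$ as a closed subset,'' so the bulk of the work is packaged into that lifting statement, and the proposition is the immediate consequence that affinity of all $\PP(\wt\mu)$ forces affinity of all $\PP^k(\mu)$.
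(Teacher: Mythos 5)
Your proposal follows the same route as the paper: realize $\PP^k(\mu)$ via the canonical cyclic $k$-cover as a closed subset of a stratum $\PP(\wt{\mu})$ of canonical divisors, and conclude affinity from the fact that a closed subvariety of an affine variety is affine. The extra care you take to justify closedness of the image (non-degeneration of the $\bbZ/k$-action over smooth limits) is a reasonable elaboration of a point the paper states without proof, so the argument is essentially identical in substance.
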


%%%%%%%%%%%%%%%%%%%

\end{document}